\newtheorem{thm}{Theorem}[section]
\theoremstyle{definition}
\newtheorem{defn}[thm]{Definition}
\newtheorem{lem}[thm]{Lemma}
\begin{document}

\title{
On the filtration of a free algebra by its associative lower central series.
}

\author{
George Kerchev
}


\date {}

\maketitle



\begin{abstract}

This paper concerns the associative lower central series ideals $M_i$ of the free 
algebra $A_n$ on $n$ generators.  Namely, we study the successive quotients 
$N_i=M_i/M_{i+1}$, which admit an action of the Lie algebra $W_n$ of vector fields on 
$\Bbb C^n$.  We bound the degree $|\lambda|$ of tensor field modules $F_\lambda$ appearing 
in the Jordan-H\"older series of each $N_i$, confirming a recent conjecture of 
Arbesfeld and Jordan.  As an application, we compute these decompositions for small 
$n$ and $i$.

\end{abstract}


\section{Introduction}
Let $A_n=\mathbb{C}\langle x_1,x_2,\ldots,x_n\rangle$ be the algebra over $\mathbb{C}$  of
noncommutative polynomials with generators $x_1, x_2, \ldots, x_n$. We consider the lower
central series of Lie ideals $L_i$ defined
inductively by $L_1=A_n$ and $L_{i+1}=[A_n, L_i]$. 
We denote by $M_i$ the two-sided ideal
in $A_n$ generated by $L_i$, $ M_i:=A_n L_i A_n$. This is
the same as the left-sided ideal $A_nL_i$. This follows from the
identity below where $a,c \in A_n$ and $b \in L_{i-1}$:
\begin{center}
$[a,b]c=-a[b,c]+[ac,b].$
\end{center}


In this paper, we study  the Jordan-H\"older series of
$N_i=M_i/M_{i+1}$.
The Jordan-H\"older series give decompositions (in the Groethendieck group) of $N_i$
into sums of irreducible $W_n$-modules of $\mathcal{F}_{\lambda}$, where
$\lambda=(\lambda_1,\lambda_2,\ldots,\lambda_n)$ for $\lambda_1\geq \lambda_2 \geq \ldots \geq \lambda_n$
are non-negative integers; $|\lambda|:=\lambda_1+\lambda_2+\ldots+\lambda_n$. Here $W_n$, the Lie algebra of polynomial vector fields, acts on the $N_i$. The Jordan-H\"older constituents are $F_\lambda$. 

We prove the following conjecture of Arbesfeld and Jordan on the upper bound of
$|\lambda|.$

\begin{thm}

For $\mathcal{F}_{\lambda}$ in the Jordan-H\"older series of $N_m$ we
have

\begin{center}
$|\lambda|\leq 2m-2 +2[\frac{n-2}{2}]$
\end{center}

For $m$ odd, this can be improved to
\begin{center}
 $|\lambda|\leq 2m-2.$
\end{center}

\label{C:1}
\end{thm}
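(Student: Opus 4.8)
The plan is to translate the statement about Jordan--H\"older constituents into a bound on a single generation degree, and then to obtain that bound by an explicit commutator reduction. First I would use that the total degree on $A_n$ (the number of generators occurring in a monomial) is exactly the eigenvalue of the Euler operator $\sum_i x_i\partial_i\in\mathfrak{gl}_n\subset W_n$, so that the natural grading on $N_m=M_m/M_{m+1}$ is compatible with the $W_n$-action. Each $\mathcal F_\lambda$ is free over the commutative algebra of functions $\mathbb{C}[x_1,\dots,x_n]$ with its generators concentrated in the single lowest degree $|\lambda|$: the generating space $V_\lambda\subset\mathcal F_\lambda$ is the joint kernel of the commuting lowering operators $\partial_1,\dots,\partial_n$ spanning $(W_n)_{-1}$, and it lies in Euler degree $|\lambda|$. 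Since a free module is projective, every $W_n$-extension among the constituents of $N_m$ splits as a $\mathbb{C}[x]$-module, whence $N_m$ is itself free over $\mathbb{C}[x]$. Consequently the largest $|\lambda|$ occurring in the Jordan--H\"older series equals the top degree of the generator space $N_m/\mathbb{C}[x]_{>0}N_m$, and it suffices to bound this generation degree by $2m-2+2\lfloor(n-2)/2\rfloor$ in general and by $2m-2$ when $m$ is odd.

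Next I would exhibit an explicit set of generators. Since $M_m=A_nL_m$ and $[A_n,M_m]\subseteq M_{m+1}$ --- which follows from $[a,b\ell]=[a,b]\ell+b[a,\ell]$ together with the standard containment $L_2L_m\subseteq M_{m+1}$ --- the left $A_n$-action on $N_m$ factors through the abelianization, so $N_m$ is generated over $\mathbb{C}[x]$ by the image of $L_m$. The task then becomes combinatorial: modulo $M_{m+1}+\mathbb{C}[x]_{>0}\cdot L_m$, the iterated commutators spanning $L_m$ may be chosen of total degree at most the asserted bound. I would prove this by a downward reduction on commutators. Whenever a slot of an iterated commutator carries a factor $x_i$, the Leibniz identities $[a,bc]=[a,b]c+b[a,c]$ and $[ab,c]=a[b,c]+[a,c]b$, together with the Jacobi identity, let that factor be absorbed into the commutative $\mathbb{C}[x]$-action at the cost of correction terms that either land in $M_{m+1}$ or are commutators of strictly smaller complexity; iterating shrinks the degree of each slot until only a bounded, irreducible core remains.

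The step I expect to be the genuine obstacle is the \emph{sharp} constant in this degree estimate, and in particular the improvement for odd $m$. The irreducible core is governed by the interplay between the parity of $m$ and the representation theory of $\mathfrak{gl}_n$: the surviving constituents behave like generalized differential forms, whose form degree cannot exceed $n$, and this dimension constraint is the source of the $2\lfloor(n-2)/2\rfloor$ correction. The parity enters through the Feigin--Shoikhet description of $N_2$ as even closed forms; I would carry a parity invariant through an induction on $m$, anchored at $N_1=\mathcal F_0$ and $N_2$, and show that for odd $m$ the relevant form degree is forced strictly below $n$, which removes the $n$-dependence and yields the bound $2m-2$. Keeping the constants exactly in step through the commutator reduction --- rather than losing a bounded additive amount at each stage --- is the delicate point, and is where I would expect the bulk of the technical work to lie.
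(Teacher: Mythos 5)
Your reduction rests on two premises that are false, and each failure is fatal to the strategy rather than repairable. First, it is not true that every Jordan--H\"older constituent $\mathcal{F}_\lambda$ is free over $\mathbb{C}[x_1,\dots,x_n]$ with generators in degree $|\lambda|$: by Rudakov's theorem (Theorem 1.3 of the paper), for $\lambda=(1^k,0^{n-k})$ with $1\le k\le n-1$ the module $\mathcal{F}_\lambda$ is the space of \emph{closed} $k$-forms. This module is not even a $\mathbb{C}[x]$-submodule of $\Omega^k$, since multiplication by a function does not preserve closedness, and it is not free in any graded sense: from the exact sequences $0\to\Omega^{j}_{cl}\to\Omega^{j}\to\Omega^{j+1}_{cl}\to0$ one gets
$$h_{\mathcal{F}_\lambda}=\frac{\sum_{j\ge k}(-1)^{j-k}e_j(t_1,\dots,t_n)}{\prod_i(1-t_i)},$$
where $e_j$ are the elementary symmetric polynomials, so the numerator has negative coefficients and degree $n$ rather than $k$. (This is exactly why the paper, following [AJ], writes $\deg p=\|\lambda\|$ rather than $|\lambda|$.) Nor are these constituents avoidable: by the paper's Theorem 2.3, $B_2\cong\Omega^{ev}_{*,ex}$, which by the Poincar\'e lemma is a direct sum of precisely such closed-form modules, so they are the basic building blocks of this theory. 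Hence the chain ``constituents free $\Rightarrow$ extensions split $\Rightarrow$ $N_m$ free over $\mathbb{C}[x]$ $\Rightarrow$ $\max|\lambda|=$ top generation degree'' collapses at every link. Second, the ``standard containment'' $L_2L_m\subseteq M_{m+1}$, which you need so that the left $A_n$-action on $N_m$ factors through the abelianization, is false in general: already for $m=2$ and $n\ge4$, the element $[x_1,x_2][x_3,x_4]$ maps under the Feigin--Shoikhet isomorphism $A/M_3\cong\Omega^{ev}$ to $4\,dx_1dx_2dx_3dx_4\neq0$, so it does not lie in $M_3$. Such a containment holds only when an index is odd --- that is precisely Theorem 2.6 (Bapat--Jordan) --- and its failure for even indices is the very reason the bound in the theorem carries the $n$-dependent term $2\lfloor(n-2)/2\rfloor$ at all.

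Beyond these two false premises, the quantitative content is deferred rather than proved: you yourself flag the constants as ``the genuine obstacle,'' but a Leibniz/Jacobi degree-shrinking induction does not produce them. In the paper, the bound $2m-2+2\lfloor(n-2)/2\rfloor$ requires (i) the Arbesfeld--Jordan surjection $f_m:(\Omega^{ev})^{\otimes m}\to B_m$ together with the Bapat--Jordan theorem that $f_m$ remains surjective when restricted to $Y=(\Omega^0)^{\otimes m-2}\otimes\bigl(\bigoplus_{j+k\le\lfloor(n-2)/2\rfloor}\Omega^{2j}\otimes\Omega^{2k}\bigr)$ --- a nontrivial structural input about where the form degree can be concentrated, with no analogue in your sketch; (ii) the paper's Theorem 1.2 and Lemma 2.1, which supply the extra polynomial tensor factor $S$ and show $J_0^2Z\subseteq\ker\breve f_m$; and (iii) an explicit Hilbert-series computation for $Z/IZ$, whose numerator degree gives the bound. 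For odd $m$, the improvement to $2m-2$ is not obtained by a parity induction anchored at $N_2$: it follows from expanding $a[a_1,\dots,[a_{m-1},b[c,d]]\dots]$ into a sum of terms lying in $M_{s+1}M_{m-s+1}$, observing that $(s+1)+(m-s+1)=m+2$ is odd so one of the two indices is always odd, and applying Theorem 2.6 to conclude that every term dies in $N_m$; this is what justifies replacing the last two tensor slots of $Y$ by $\Omega^0$ and removes the $n$-dependence entirely. Your proposed invariant (``form degree forced strictly below $n$'') would at best shrink the correction term, not eliminate it, so even granting your framework the odd case would not follow.
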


We apply similar techniques to those of \cite{ND}, \cite{BJ}, who studied the Lie 
quotients $B_i=L_i/L_{i+1}$.  The proof of Theorem 1.1 depends on the following:

\begin{thm}
 $N_i=V\cdot L_i/(A\cdot L_{i+1} \cap V \cdot L_i)$ where $V$ is spanned by elements of
 $A_n$ of degree $\leq 1$ .
\label{T:1}
\end{thm}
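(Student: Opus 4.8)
The plan is to reduce Theorem~\ref{T:1} to the single assertion that
\[
M_i \;=\; V\cdot L_i \;+\; M_{i+1}.
\]
Granting this, the second isomorphism theorem finishes the proof: the inclusion $V\cdot L_i\hookrightarrow M_i$ induces a map $V\cdot L_i\to M_i/M_{i+1}=N_i$ whose kernel is exactly $V\cdot L_i\cap A_nL_{i+1}$ and whose image is $(V\cdot L_i+M_{i+1})/M_{i+1}$, so the displayed equality is equivalent to surjectivity, i.e.\ to the stated isomorphism. One inclusion is immediate, since $V\subseteq A_n$ gives $V\cdot L_i\subseteq A_nL_i=M_i$ and $M_{i+1}\subseteq M_i$. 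Thus everything comes down to the reverse inclusion $A_nL_i\subseteq V\cdot L_i+M_{i+1}$, that is, to reducing the degree of the left coefficient of a typical element $a\ell$ (with $a\in A_n$, $\ell\in L_i$) down to $\le 1$ modulo $M_{i+1}$.

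The two tools I would use are both congruences modulo $M_{i+1}$. First, a \emph{sliding} move: for any generator $x$ and any $\ell\in L_i$ one has $x\ell-\ell x=[x,\ell]\in L_{i+1}\subseteq M_{i+1}$, and since $M_{i+1}$ is a two-sided ideal one may freely commute generators past $L_i$-elements at the cost of terms in $M_{i+1}$. Second, an \emph{absorption} identity: for $w\in L_{i-1}$ and any $a,b\in A_n$,
\[
a\,[b,w] \;=\; [ab,w]\;-\;[a,w]\,b ,
\]
in which both $[ab,w]$ and $[a,w]$ lie in $L_i=[A_n,L_{i-1}]$; this trades a left coefficient for absorption into the commutator, at the price of a remainder term with the coefficient moved to the right. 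With these in hand I would induct on the coefficient degree $k=\deg a$. For $k\le 1$ there is nothing to do. Writing $a=a''x$ and sliding $x$ across $\ell$ gives $a\ell\equiv (a''\ell)x \pmod{M_{i+1}}$; applying the inductive hypothesis to the lower-degree coefficient $a''$ and sliding once more rewrites this as a sum of terms $(v_sx)\ell_s$ with $v_s\in V$, hence of coefficient degree $\le 2$. For $k\ge 3$ these are absorbed by the inductive hypothesis, so the entire statement collapses to the single base case of coefficient degree $2$.

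The crux --- and the step I expect to be the main obstacle --- is therefore this base case: showing $yx\,\ell\in V\cdot L_i+M_{i+1}$ for generators $x,y$ and $\ell\in L_i$. Writing $\ell=[b,w]$ with $w\in L_{i-1}$ and applying the absorption identity with coefficient $yx$ yields
\[
yx\,\ell \;\equiv\; [yxb,\,w]\;-\;b\,[yx,w] \pmod{M_{i+1}},
\]
whose first term lies in $L_i\subseteq V\cdot L_i$; the difficulty is the remainder $b\,[yx,w]$, whose coefficient $b$ --- inherited from the presentation of $\ell$ --- need not have degree $\le 1$ (for instance $\ell=[x_1^2,w]$ forces $b=x_1^2$), and which cannot be reduced by appealing to the coefficient-degree induction without circularity, since it sits in the same total degree. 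Note that the naive rearrangements (re-absorbing $b$, or sliding it back around the commutator) only reproduce the identity $yx\ell\equiv yx\ell$ and yield no gain, so a genuinely new input is required. To control the remainder I would run a secondary induction on $\deg\ell$: in the lowest-degree case $\deg\ell=i$ the element $\ell$ is a genuine left-normed Lie monomial in the generators, so it may be presented with $b$ a single generator and the remainder already lies in $V\cdot L_i$; for larger $\deg\ell$ one must repeatedly apply the absorption and Jacobi identities, discarding the resulting $L_{i+1}$-terms into $M_{i+1}$, and arrange them so that a suitable weight --- the degree of the inner coefficient $b$, or a lexicographic combination of it with $\deg w$ --- strictly decreases. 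Designing this monovariant so that the reduction provably terminates is the technical heart of the proof; by contrast the sliding and absorption identities themselves are routine.
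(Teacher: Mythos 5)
Your reduction to the equality $M_i = V\cdot L_i + M_{i+1}$, and the coefficient-degree induction (using sliding) that collapses everything to the case of a degree-two coefficient $yx\ell$, are correct and match the overall framing of the paper's argument. But the proposal stops exactly where the theorem needs proving: you acknowledge that the base case $yx\ell \in V\cdot L_i + M_{i+1}$ is "the technical heart" and you do not prove it. A secondary induction on $\deg\ell$ with an unspecified weight is a plan, not an argument; as you yourself observe, writing $\ell=[b,w]$ and absorbing the coefficient produces the remainder $b\,[yx,w]$, whose coefficient $b$ is inherited from the presentation of $\ell$ and is not controlled by anything you set up. So there is a genuine gap, and it sits precisely at the crux.

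The missing idea --- the paper's Lemma~\ref{EL} --- is a cancellation, not a monovariant. For $b\in L_{i-1}$ and $a,x,y\in A$, the basic congruence $[xa,b]\equiv x[a,b]+a[x,b] \pmod{L_{i+1}}$ yields three expansions modulo $AL_{i+1}$: left-multiplying by $y$ gives $y[xa,b]\equiv yx[a,b]+ya[x,b]$; swapping $x$ and $y$ gives $x[ya,b]\equiv xy[a,b]+xa[y,b]$; and expanding the fully absorbed bracket gives $[xya,b]\equiv xy[a,b]+xa[y,b]+ya[x,b]$. Adding the first two and subtracting the third, the uncontrolled terms $ya[x,b]$ and $xa[y,b]$ --- the exact analogues of your problematic remainder --- cancel identically, leaving
\[
yx[a,b]\;\equiv\; x[ya,b]+y[xa,b]-[xya,b] \pmod{A L_{i+1}}.
\]
Every term on the right has left coefficient $x$, $y$, or $1$, and none depends on how the bracket is presented; so taking $y$ of degree one and inducting on the coefficient degree finishes the proof outright, with no secondary induction on $\deg\ell$ at all. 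What your absorption identity misses is that one must expand $[xya,b]$ as well and play it against the two partial absorptions $y[xa,b]$ and $x[ya,b]$; that three-term combination is what kills the remainder, and it is the genuinely new input your own analysis correctly predicted was required.
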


Feigin and Shoikhet ~\cite{BB} introduced the quotients
$B_i=L_i/L_{i+1}$. They were further studied by  Dobrovolska, Etingof, Kim and  Ma ~\cite{ED,DK,PE:1}, as well as Arbesfeld and Jordan(~\cite{ND}). In ~\cite{ED}
and ~\cite{ND}, bounds $|\lambda|$ for the Jordan-H\"older series of $B_i$ were produced,
and checked for small $i$ and $n$ by computer. We prove an analogous bound
on $|\lambda|$ for the Jordan-H\"older series of $N_i$ and show that it
grows linearly with $i$.

The structure of this paper is as follows. The following two subesctions ~\ref{Pr-1} and ~\ref{Pr-2} contain a
review of the representation theory of the Lie algebra of polynomial vector fields and the tensor field modules over $W_n$.  Section~\ref{Rez} presents a proof of the main result.  In Section~\ref{JH}  the Jordan-H\"{o}lder series
for $N_m(A_n)$ for small $n$ and $m$ are computed.



\subsection{Representation theory of the Lie algebra of polynomial vector fields}{\label{Pr-1}}

Let $W_n$ denote the Lie algebra of polynomial vector fields. As a vector space, we have: 
$$W_n=\bigoplus_i\mathbb{C}[x_1,x_2,\ldots,x_n]\partial_i.$$
The Lie bracket is given by: 
$$[p\partial_i,q\partial_j]=p\frac{\partial q}{\partial  x_i}\partial_j-q\frac{\partial p}{\partial
  x_j}\partial_i$$.

According to [EKM], $W_n$ acts on each $N_i$, and as a $W_n$-module, $N_i$ has a Jordan-H\"older series whose simple quotients are of the form $\mathcal{F}_{\lambda}$ (see section 2.3 below for the definition). Let $h_M$ be the Hilbert series for a graded vector space $M$. Then we have:
$$h_{\mathcal{F}_{\lambda}}=\frac{p}{(1-t_1)(1-t_2)\cdots (1-t_n)}, $$ where $\deg{p}=\|\lambda\|.$


We will bound $h_{N_i}\prod_i(1-t_i)$.  This bound and the knowledge of the Hilbert series of $\mathcal{F}_\lambda$ as 
above
 will allow us to control decompositions of the $N_i$  into the $\mathcal{F}_{\lambda}$.

\subsection{Tensor field modules over $W_n$ }{\label{Pr-2}}

For $\lambda=(\lambda_1,\lambda_2,\ldots,\lambda_n)$, $\lambda_1\geq \lambda_2\geq \lambda_3 \ldots\geq
\lambda_n$, where the $\lambda_i$ are nonnegative integers, let $V_{\lambda}$ be the
irreducible representation of $\mathfrak{gl}_n $ of highest weight $\lambda$.
We set $|\lambda|:=\lambda_1+\lambda_2+\ldots+\lambda_n$.

Let $\mathcal{\breve F}_{\lambda}$ be the space of polynomial tensor fields of type $V_{\lambda}$ on $\mathbb{C}^n$. As a vector space $\mathcal{\breve F}_{\lambda}:=\mathbb{C}[x_1,x_2,\ldots ,x_n]\otimes V_{\lambda}$.
It is known that  $\mathcal{\breve F}_{\lambda}$ is a representation of $W_n$ with action given by the
standard Lie derivative formula for action of vector fields on covariant tensor fields (see [R]).

\begin{thm} ~\cite{R}  If $\lambda_1 \geq 2$, or if $\lambda = (1^n)$, then  $\mathcal{\breve F}_{\lambda}$ is irreducible. Otherwise,
if $\lambda = (1^k, 0^{n-k})$, then  $\mathcal{\breve F}_{\lambda}$ is the space
$\Omega ^k = \Omega ^ k(\mathbb{C}^n)$ of polynomial differential
$k$-forms on $\mathbb{C}^n$, and it contains a unique irreducible submodule which is the space
of all closed differential $k$-forms.
\end{thm}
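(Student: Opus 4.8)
The plan is to exploit the two compatible $\mathbb{Z}$-gradings carried by $\mathcal{\breve F}_{\lambda}$ and by $W_n$, following the method of Rudakov. Write $V=\mathbb{C}^n$ and let $W_n=\bigoplus_{k\geq -1}\mathfrak{g}_k$ be the grading by polynomial degree, so that $\mathfrak{g}_{-1}=\langle\partial_1,\dots,\partial_n\rangle$ is the span of the translations, $\mathfrak{g}_0=\mathfrak{gl}_n=\langle x_i\partial_j\rangle$, and $\mathfrak{g}_k=S^{k+1}V^{\ast}\otimes V$ for $k\geq 1$. Correspondingly $\mathcal{\breve F}_{\lambda}=\bigoplus_{m\geq 0}F_m$ with $F_m=S^mV^{\ast}\otimes V_{\lambda}$ the tensor fields whose coefficients are homogeneous of degree $m$, and $\mathfrak{g}_k\cdot F_m\subseteq F_{m+k}$. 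First I would check that every $W_n$-submodule respects this decomposition: the Euler field $E=\sum_i x_i\partial_i$ lies in $\mathfrak{g}_0$ and, by the Lie-derivative formula, acts on $F_m$ by the scalar $m+|\lambda|$ (the $m$ from differentiating the coefficient, the $|\lambda|$ from the action of $\mathrm{Id}\in\mathfrak{gl}_n$ on $V_{\lambda}$). These scalars are pairwise distinct, so any submodule, being $E$-stable, is the sum of its homogeneous pieces.

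Next I would locate the bottom of a submodule. Let $U\neq 0$ be a graded submodule and $F_m$ its lowest nonzero component. Since $\mathfrak{g}_{-1}$ lowers degree and $U$ is zero in degree $m-1$, the space $U\cap F_m$ is annihilated by every $\partial_i$; but on $F_m=S^mV^{\ast}\otimes V_{\lambda}$ the translations act only on the coefficient, and $\bigcap_i\ker(\partial_i\mid S^mV^{\ast})=0$ for $m\geq 1$. Hence $m=0$, so $U\cap F_0$ is a nonzero $\mathfrak{gl}_n$-submodule of the irreducible $V_{\lambda}=F_0$, forcing $U\supseteq F_0$. Therefore $U(W_n)\cdot F_0$ is contained in \emph{every} nonzero submodule: it is the unique minimal nonzero submodule, and in particular it is irreducible. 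This already establishes the existence and uniqueness of a smallest irreducible submodule in all cases, and reduces the whole theorem to one question: whether $F_0$ generates all of $\mathcal{\breve F}_{\lambda}$.

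By the previous step $\mathcal{\breve F}_{\lambda}$ is irreducible iff $U(W_n)\cdot F_0=\mathcal{\breve F}_{\lambda}$, iff it has no nonzero proper quotient. To decide this I would pass to invariant operators: a proper quotient gives a nonzero $W_n$-equivariant differential operator $\mathcal{\breve F}_{\lambda}\to\mathcal{\breve F}_{\mu}$. The heart of the matter, and the step I expect to be the genuine obstacle (this being the substance of Rudakov's theorem~\cite{R}), is the classification of such operators. Via a singular-vector computation, decomposing the relevant $\mathfrak{gl}_n$-tensor products by Pieri's rule and testing annihilation by $\mathfrak{g}_{-1}$ on candidate lowest-weight vectors, one shows that the only nonzero $W_n$-invariant operators between the tensor field modules are, up to scalar, the de Rham differentials $d\colon\Omega^k\to\Omega^{k+1}$, and that these occur precisely for $\lambda=(1^k,0^{n-k})$ with $0\leq k<n$.

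Granting this, the two cases follow. If $\lambda_1\geq 2$, or $\lambda=(1^n)$ (for which $\Omega^{n+1}=0$, so the only candidate $d$ is zero), there is no such operator, hence no proper quotient, and $\mathcal{\breve F}_{\lambda}$ is irreducible. If instead $\lambda=(1^k,0^{n-k})$ with $k<n$, then $\mathcal{\breve F}_{\lambda}=\Omega^k$ and $d$ produces the proper submodule $\ker d$, the closed $k$-forms, which by the second step is the unique irreducible submodule; here one checks $U(W_n)\cdot F_0$ equals all of $\ker d$ because the constant forms generate the exact forms, using Cartan's formula $L_X=d\,\iota_X+\iota_X\,d$ together with the transitivity of $\mathfrak{gl}_n$ on $\Lambda^kV^{\ast}$, and the polynomial Poincaré lemma (closed $=$ exact in positive degree). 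The Poincaré lemma also identifies the remaining composition factor of $\Omega^k$ with the closed $(k+1)$-forms, pinning down the Jordan--H\"older structure of the non-generic modules.
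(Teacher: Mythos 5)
The paper itself offers no proof of this statement: it is imported verbatim from Rudakov's paper [R] as background (Theorem 2.2), so there is no internal argument to compare yours against, and your proposal has to be judged as a reconstruction of Rudakov's proof. The structural part of your reconstruction is correct and essentially complete. The Euler field $E=\sum_i x_i\partial_i$ does act on $F_m=S^mV^*\otimes V_\lambda$ by $m+|\lambda|$, so every submodule is graded; the lowest graded piece of a nonzero submodule is killed by $\mathfrak{g}_{-1}$, and since $\bigcap_i\ker\bigl(\partial_i|_{S^mV^*}\bigr)=0$ for $m\geq 1$ and $V_\lambda$ is $\mathfrak{gl}_n$-irreducible, that piece must be all of $F_0$; hence $U(W_n)\cdot F_0$ is the unique minimal, therefore unique irreducible, submodule. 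Your identification of this submodule with the closed forms when $\lambda=(1^k,0^{n-k})$, via Cartan's formula and the polynomial Poincar\'e lemma, also works, with one small slip: the fact you need is not ``transitivity of $\mathfrak{gl}_n$ on $\Lambda^kV^*$'' but that the contractions $\iota_{\partial_i}\omega_0$, $\omega_0\in\Lambda^kV^*$, span $\Lambda^{k-1}V^*$, so that the elements $f\,\iota_{\partial_i}\omega_0$ span $\Omega^{k-1}$ and their differentials give all exact forms.

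The genuine gap is the step you yourself flag as ``the heart of the matter,'' and it is not a minor deferral: the entire dichotomy of the theorem (irreducibility for $\lambda_1\geq 2$ or $\lambda=(1^n)$ versus reducibility of $\Omega^k$, $k<n$) rests on the claim that the only nonzero $W_n$-equivariant operators of positive order between tensor field modules are the de Rham differentials, and you assert this rather than prove it. That classification---the singular-vector computation---is precisely the content of Rudakov's theorem, so asserting it makes the argument circular as a proof of the statement. In addition, the bridge into that step is also unproved: you claim that a proper quotient of $\mathcal{\breve F}_{\lambda}$ yields a nonzero equivariant differential operator $\mathcal{\breve F}_{\lambda}\to\mathcal{\breve F}_{\mu}$, but this requires the coinduced-module structure of the tensor field modules and Frobenius reciprocity (project the lowest graded piece of the quotient, which is a $\mathfrak{gl}_n$-module annihilated by $\mathfrak{g}_{\geq 1}$ in the appropriate sense, onto an irreducible $V_\mu$, extend to a $W_n$-map into the coinduction, and use gradedness to land in the polynomial part); none of this is in your text. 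What you have, then, is a correct and well-organized reduction of the theorem to its computational core, not a proof of it.
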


Denote by $\mathcal{F}_{\lambda}$ the irreducible submodule of  $\mathcal{\breve F}_{\lambda}$, so that $\mathcal{F}_{\lambda} =  \mathcal{\breve F}_{\lambda}$ unless $\lambda = (1^k, 0^{n-k})$
for some $1 \leq k \leq n - 1$.

\section{Proof of Conjecture~\ref{C:1}}{\label{Rez}}

In this section we prove the bound of the
Jordan-H\"older series of $N_i$ stated in Theorem~\ref{C:1}.

\noindent We begin by proving Theorem 1.2.  This result is also useful in simplifying computation of the Hilbert
series of $N_i$.  We fix $n$, and let $A$ denote $A_n$.

\noindent For the proof of Theorem 1.2 we use the following Lemma:

\begin{lem} For $b\in L_{i-1}$, $a,x,y \in A$, we have the following identity:
\begin{center}
$ yx[a,b]=x[ya,b]+y[xa,b]-[xya,b] \mod A L_{i+1}.$
\end{center}

\label{EL}
\end{lem}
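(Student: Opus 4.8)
The plan is to verify the identity by computing the difference of the two sides and checking that it lies in $AL_{i+1}$. The only algebraic input needed is the Leibniz rule for the inner derivation $[\,\cdot\,,b]$ on the associative algebra $A$, namely $[uv,b]=u[v,b]+[u,b]v$ for all $u,v\in A$; this holds in any associative algebra and requires no hypothesis on $b$.

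First I would expand each of the three bracketed terms on the right-hand side with this rule, regrouping products so that $a$ always sits on the right:
\[
[ya,b]=y[a,b]+[y,b]a,\qquad [xa,b]=x[a,b]+[x,b]a,
\]
and, applying the rule twice,
\[
[xya,b]=xy[a,b]+x[y,b]a+[x,b]ya.
\]
Substituting these into $x[ya,b]+y[xa,b]-[xya,b]$, the contributions $xy[a,b]$ and $x[y,b]a$ cancel, leaving
\[
x[ya,b]+y[xa,b]-[xya,b]=yx[a,b]+\bigl(y[x,b]-[x,b]y\bigr)a=yx[a,b]+[y,[x,b]]a.
\]
Hence the difference between the right-hand side and the left-hand side $yx[a,b]$ is exactly the term $[y,[x,b]]a$.

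It then remains to see that this leftover term lies in $AL_{i+1}$. Since $b\in L_{i-1}$ we have $[x,b]\in L_i$, and therefore $[y,[x,b]]\in[A,L_i]=L_{i+1}$, so that $[y,[x,b]]a\in L_{i+1}A$. The one subtlety — and the only place the specific structure of the $M_i$ enters — is that this element is a priori only in the \emph{right} multiple $L_{i+1}A$, whereas the congruence is taken modulo the \emph{left} ideal $AL_{i+1}$. This is resolved by the observation recorded in the introduction that $M_{i+1}=A_nL_{i+1}A_n=A_nL_{i+1}$ is in fact two-sided: since $L_{i+1}\subseteq A_nL_{i+1}$ we get $L_{i+1}A\subseteq A_nL_{i+1}A_n=AL_{i+1}$. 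Thus $[y,[x,b]]a\equiv 0\pmod{AL_{i+1}}$, which yields the claimed identity. I do not expect a genuine obstacle here; the only things to get right are the bookkeeping of the repeated Leibniz expansions and the left/right ideal distinction in the final step.
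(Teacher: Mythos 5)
Your proof is correct, and the core of it is the same as the paper's: repeated use of the Leibniz rule $[uv,b]=u[v,b]+[u,b]v$ plus the fact that bracketing $L_i$ against $A$ lands in $L_{i+1}$. The organization differs in one meaningful way, though. The paper works with congruences from the start: it records $[xa,b]\equiv x[a,b]+a[x,b]$ and $[xya,b]\equiv xy[a,b]+xa[y,b]+ya[x,b]$, deliberately rewriting each bracket so the coefficients sit to the \emph{left} (absorbing the reordering errors $[[x,b],a]$, $x[[y,b],a]$, $[[x,b],ya]$ into the modulus), then multiplies the first congruence by $y$, swaps $x\leftrightarrow y$, and subtracts. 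Because every error term is thereby manufactured directly in the form $A\cdot L_{i+1}$, the paper never has to move an element of $A$ across $L_{i+1}$. You instead compute exactly, isolate the single discrepancy $[y,[x,b]]a$, and observe it lies in $L_{i+1}A$ --- which forces you to confront the left/right issue head-on. Your resolution is legitimate: the identity $[a,b]c=-a[b,c]+[ac,b]$ stated in the introduction gives $M_{i+1}=AL_{i+1}A=AL_{i+1}$, hence $L_{i+1}A\subseteq AL_{i+1}$. What your route buys is transparency --- the error term is explicit, and your computation incidentally shows that the paper's second congruence really holds modulo $AL_{i+1}$ rather than modulo $L_{i+1}$ as stated, a harmless imprecision --- at the price of invoking the two-sidedness fact, which the paper's arrangement of terms is designed to avoid.
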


\begin{proof}
Let $b,a,x,y$ be as above. Then we have the following identities:

\begin{equation}
 [xa,b]=x[a,b]+a[x,b]  \mod L_{i+1},
\label{1}
\end{equation}
\begin{equation}
 [xya,b]=xy[a,b]+xa[y,b]+ya[x,b]  \mod L_{i+1}.
\label{2}
\end{equation}
Multiplying (1) by $y$, we get
\begin{equation}
 y[xa,b]=yx[a,b]+ya[x,b]   \mod A L_{i+1}.
\label{3}
\end{equation}
Interchanging $x$ and $y$ in (\ref{3}), we get
\begin{equation}
 x[ya,b]=xy[a,b]+xa[y,b]   \mod A L_{i+1}.
\label{4}
\end{equation}
Now subtract (\ref{2}) from (\ref{3}) and (\ref{4}). We get
\begin{equation}
 yx[a,b]=x[ya,b]+y[xa,b]-[xya,b] \mod A L_{i+1}.
\label{5}
\end{equation}
\end{proof}
Taking $y$ to be of degree one and applying the lemma repeatedly, we can reduce the term in front of the bracket of an arbitrary element of $A\cdot L_i$
to something in $V\cdot L_i$ by adding terms in $A \cdot L_{i+1} \cap V \cdot L_i$, which is exactly Theorem 1.2.

\subsection{Proof of Theorem 1.1}
We first recall some definitions and results that we will need in the proof.

\begin{defn} Let $\bar{Z}$ be the image of $A[A,[A,A]]$ in $B_1$,
which was shown in ~\cite{BB} to be central in $B$. We define $\bar B_1$ to be the quotient $\bar B_1=B/\bar{Z}$.
\end{defn}
Now, recall the Feigin-Shoiket map ~\cite{BB}.
\begin{thm}
\label{FSM}
There is a unique isomorphism of algebras,
$$\xi: \Omega^{ev}_{*} \to  A/A[A, [A,A]],$$
$$x_i \to x_i.$$
It restricts to an isomorphism $\xi: \Omega^{ev}_{*,ex} \to  B_2,$ and descends to an isomorphism
$\xi: \Omega^{ev}_{*}/\Omega^{ev}_{*,ex} \to  \bar B_1,$.
\end{thm}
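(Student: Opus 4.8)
The plan is to reconstruct the Feigin--Shoikhet isomorphism directly, since this is the main result of \cite{BB}. The target $\Omega^{ev}_*$ is the space of even-degree polynomial differential forms on $\mathbb{C}^n$ equipped with the Fedosov product $\alpha * \beta = \alpha\wedge\beta + d\alpha\wedge d\beta$, where $d$ is the de Rham differential. My first step would be to check that this product preserves $\Omega^{ev}$ and is associative and unital with unit $1\in\Omega^0$. This is a short computation using $d^2=0$ and the Leibniz rule (in which the sign $(-1)^{|\alpha|}$ is trivial since $\alpha$ is even): one finds $d(d\alpha\wedge d\beta)=0$, hence $d(\alpha*\beta)=d(\alpha\wedge\beta)=d\alpha\wedge\beta+\alpha\wedge d\beta$, and associativity of $*$ follows by expanding both $(\alpha*\beta)*\gamma$ and $\alpha*(\beta*\gamma)$.

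Next, by the universal property of the free algebra $A$, the assignment $x_i\mapsto x_i$ extends uniquely to an algebra homomorphism $\xi\colon A\to\Omega^{ev}_*$. The key computation is that, because even forms wedge-commute,
\[
 [\alpha,\beta]_* = \alpha*\beta-\beta*\alpha = d\alpha\wedge d\beta - d\beta\wedge d\alpha = 2\,d\alpha\wedge d\beta = 2\,d(\alpha\wedge d\beta),
\]
so every $*$-commutator is an exact even form; in particular $\xi([x_i,x_j])=2\,dx_i\wedge dx_j$. Iterating, $[\gamma,[\alpha,\beta]_*]_* = 2\,d\gamma\wedge d(2\,d\alpha\wedge d\beta)=0$ since $d(d\alpha\wedge d\beta)=0$. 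Hence $\xi(L_3)=0$, and as $\xi$ is an algebra map, $\xi(M_3)=\xi(A L_3)=\xi(A)\,\xi(L_3)=0$. Therefore $\xi$ factors through an algebra homomorphism $\bar\xi\colon A/M_3\to\Omega^{ev}_*$ which is the identity on generators.

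I would then prove $\bar\xi$ is bijective. Surjectivity is immediate: the image contains every $x_i$ and, from the commutators, every constant $2$-form $dx_i\wedge dx_j$, and repeated $*$-products of these yield all monomial even forms. Injectivity is the main obstacle: one must show the kernel of $\xi$ is no larger than $M_3$. I would establish this either by constructing an explicit inverse quantization map $\Omega^{ev}_*\to A/M_3$ sending $dx_{i_1}\wedge\cdots\wedge dx_{i_{2k}}$ to a suitably antisymmetrized product of generators and verifying the two maps are mutually inverse, or by comparing Hilbert series and checking that $\dim (A/M_3)_d = \dim\Omega^{ev}_d$ in each degree, so that the surjection $\bar\xi$ is forced to be an isomorphism. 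This inverse/dimension step is where the real content lies.

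Finally, the restriction and quotient statements follow from the commutator computation together with the standard isomorphism theorems. By the displayed formula the image of $L_2=[A,A]$ under $\xi$ is the span of the exact even forms $d(\alpha\wedge d\beta)$, which by surjectivity is all of $\Omega^{ev}_{*,ex}$; since $\xi(L_3)=0$, the restriction of $\bar\xi$ identifies $\Omega^{ev}_{*,ex}$ with the image of $L_2$, namely $B_2=L_2/L_3$. Passing to the quotient, $\bar\xi$ induces an isomorphism $\Omega^{ev}_*/\Omega^{ev}_{*,ex}\cong (A/M_3)/\xi(L_2)=A/(L_2+M_3)=B_1/\bar Z=\bar B_1$, as claimed. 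I would present all three maps as $W_n$-equivariant, which follows from the naturality of $\xi$ on the generators.
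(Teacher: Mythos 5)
The first thing to note is that the paper contains no proof of this statement: it is recalled verbatim, with a citation, from Feigin--Shoikhet \cite{BB} and used as imported input. So your proposal cannot be measured against any argument in the paper; it is an attempt to reprove the main theorem of \cite{BB} from scratch. Judged on those terms, the routine parts of your reconstruction are correct: associativity of the Fedosov product on even forms, the commutator formula $[\alpha,\beta]_*=2\,d\alpha\wedge d\beta$, the consequent vanishing $\xi(L_3)=0$ and hence $\xi(M_3)=\xi(AL_3)=0$, surjectivity of the induced map $\bar\xi\colon A/M_3\to\Omega^{ev}_*$, the uniqueness clause, and (granting bijectivity) the formal deduction of the statement about $\bar B_1$.

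However, there are two genuine gaps. The main one you concede yourself: injectivity of $\bar\xi$, i.e.\ $\ker\xi\subseteq M_3$, is not proved but only assigned to one of two possible strategies, and this is precisely where the content of the theorem lies. The Hilbert-series variant is circular as stated: $\dim(A/M_3)_d$ is not known in advance --- determining it is essentially equivalent to the theorem --- so what is actually required is the upper bound $\dim(A/M_3)_d\le\dim(\Omega^{ev})_d$, obtained by exhibiting an explicit spanning set of $A/M_3$ matching a basis of $\Omega^{ev}$ (equivalently, by constructing the inverse quantization map and verifying it is well defined); none of that combinatorial work is done. Second, the claim about $B_2$ hides an additional nontrivial fact: what your argument identifies with $\Omega^{ev}_{*,ex}$ is the image of $L_2$ in $A/M_3$, namely $L_2/(L_2\cap M_3)$, and calling this $B_2=L_2/L_3$ requires $L_2\cap M_3=L_3$. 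Only the inclusion $L_3\subseteq L_2\cap M_3$ is obvious; the equality does not follow from bijectivity of $\bar\xi$, and without it you have produced only a surjection $B_2\twoheadrightarrow\Omega^{ev}_{*,ex}$ rather than the asserted isomorphism. Both points are settled in \cite{BB}, but they are exactly the parts of the theorem that resist a soft argument, so as a standalone proof the proposal is incomplete.
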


For the second part of the proof of Theorem~\ref{C:1} we use some of the methods
introduced in~\cite{ND}. Recall the map from~\cite{ND} :
\begin{thm}
\label{mapth}
There is a surjective map $f_m: (\Omega^{ev})^{\otimes m} \to B_m$ such that $$f_m(a_1, \ldots , a_m)=[\xi(a_1),[\xi(a_2), \ldots [\xi(a_{m-1}),\xi(a_m)]]],$$
where $\xi: \Omega^{ev} \to \breve B_1$ is the Feigin-Shoiket map from Theorem ~\ref{FSM}.
\end{thm}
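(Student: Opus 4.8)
The plan is to define $f_m$ on decomposable tensors by choosing, for each $a_i$, a lift $\tilde a_i \in A$ of the class $\xi(a_i) \in \bar B_1 = A/A[A,[A,A]]$, setting
$$f_m(a_1\otimes\cdots\otimes a_m) = [\tilde a_1,[\tilde a_2,\ldots,[\tilde a_{m-1},\tilde a_m]]] \bmod L_{m+1},$$
and extending multilinearly. Since an $m$-fold commutator of elements of $A$ lies in $L_m$ by the definition $L_{k+1}=[A,L_k]$, this expression represents a class in $B_m=L_m/L_{m+1}$. Two obligations remain: that the class is independent of the chosen lifts (well-definedness), and that $f_m$ is surjective.

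Surjectivity I would deduce once well-definedness is in hand. Iterating $L_m=[A,L_{m-1}]$ shows that $L_m$ is spanned by $m$-fold commutators $[b_1,[b_2,\ldots,[b_{m-1},b_m]]]$ with $b_i\in A$, so $B_m$ is spanned by their images. By the Feigin--Shoikhet isomorphism (Theorem~\ref{FSM}) the map $\xi$ is onto $\bar B_1 = A/A[A,[A,A]]$, so each $b_i$ is congruent modulo $A[A,[A,A]]$ to a lift of some $\xi(\omega_i)$, $\omega_i\in\Omega^{ev}$. Well-definedness then permits replacing each $b_i$ by this representative without changing the class in $B_m$, exhibiting every spanning element as a value of $f_m$.

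The crux is therefore well-definedness, which reduces to one filtration estimate: if a single entry of the iterated commutator is replaced by an element $z\in\ker\xi=A[A,[A,A]]=M_3$, the value must change by an element of $L_{m+1}$. I would induct on $m$, peeling off the outermost bracket so that it suffices to show the commutator descends to a well-defined bilinear map $\bar B_1\otimes B_{m-1}\to B_m$. For the second variable this needs only $[A,L_m]\subseteq L_{m+1}$, which is the defining equality. For the first variable it needs the containment $[M_3,L_{m-1}]\subseteq L_{m+1}$; note that $A/A[A,[A,A]]$ is commutative by Theorem~\ref{FSM}, hence $L_2\subseteq M_3$, so this single estimate subsumes the lower-order contributions. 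To attack it I would expand $[cw,g]=c[w,g]+[c,g]\,w$ for $c\in A$, $w\in L_3$, $g\in L_{m-1}$, apply the Lie-filtration property $[L_i,L_j]\subseteq L_{i+j}$ to locate $c[w,g]\in A\cdot L_{m+2}=M_{m+2}$, and rewrite the residual product $[c,g]\,w\in L_m\cdot L_3$ modulo $L_{m+1}$ using the reduction identity of Lemma~\ref{EL}.

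The main obstacle is precisely this last step. Neither resulting term lies in $L_{m+1}$ for obvious reasons: products of Lie elements such as those in $L_m\cdot L_3$ need not be Lie elements, and passing from the associative ideal filtration $M_\bullet$ (where $c[w,g]$ naturally lives) back into the Lie filtration $L_\bullet$ is exactly the comparison that these papers must control. I expect to dispose of both terms only by repeated application of Lemma~\ref{EL} together with the structural results of Feigin--Shoikhet, rewriting such products modulo $L_{m+1}$ until they manifestly land in higher commutator ideals. Once $[M_3,L_{m-1}]\subseteq L_{m+1}$ is secured, the base case $m=2$ is furnished directly by the isomorphism $\xi\colon\Omega^{ev}_{*,ex}\to B_2$ of Theorem~\ref{FSM}, and the induction closes, giving simultaneously well-definedness and, with the paragraph above, surjectivity.
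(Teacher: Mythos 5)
Your overall skeleton (define $f_m$ via lifts, reduce everything to a filtration containment, then get surjectivity from the fact that left-normed brackets span $L_m$ together with surjectivity of $\xi$) is the right one, and you correctly isolate the crux: invariance of the iterated bracket when a lift is changed by an element of $M_3=A[A,[A,A]]$, i.e.\ containments of the form $[M_3,L_k]\subseteq L_{k+2}$. But you leave exactly this crux unproved, and your proposed attack on it cannot work. The missing idea is that this containment is not something to be re-derived by hand: it is precisely the Feigin--Shoikhet theorem that the image $\bar Z$ of $A[A,[A,A]]$ in $B_1$ is \emph{central} in the graded Lie algebra $B=\bigoplus_i B_i$, which the paper records verbatim in Definition~2.3 and attributes to \cite{BB}. (Note that the paper itself does not prove Theorem~\ref{mapth} at all --- it quotes it from \cite{ND} --- but that citation chain is exactly how the well-definedness is settled: centrality of $\bar Z$ gives $[M_3,L_k]\subseteq L_{k+2}$ in every degree, and iterating the bracket with the remaining entries lands you in $L_{m+1}$.) Your plan to establish the containment by expanding $[cw,g]=c[w,g]+[c,g]w$ and then invoking Lemma~\ref{EL} fails for a structural reason: Lemma~\ref{EL} is a congruence modulo $A\,L_{i+1}=M_{i+1}$, i.e.\ it controls the \emph{associative} filtration relevant to $N_m=M_m/M_{m+1}$, and can never certify membership in the strictly smaller space $L_{m+1}$, which is what $B_m=L_m/L_{m+1}$ requires. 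You concede this obstacle yourself, so the proof is genuinely incomplete.

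There is also an outright error en route: you assert that $A/A[A,[A,A]]$ is commutative by Theorem~\ref{FSM}, hence $L_2\subseteq M_3$. This is false. Theorem~\ref{FSM} identifies $A/M_3$ with $\Omega^{ev}$ equipped with the \emph{Fedosov} product $a*b=ab+da\wedge db$, which is noncommutative: for $a,b\in\Omega^0$ one has $a*b-b*a=2\,da\wedge db$, and correspondingly $[x_1,x_2]\notin M_3$ (its image under $\xi^{-1}$ is $2\,dx_1\wedge dx_2\neq 0$). Indeed, were $L_2\subseteq M_3$ true, one would get $M_2=M_3$ and $N_2=0$, contradicting the known decompositions. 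Fortunately this claim is not needed: the $L_2$-ambiguity in the first variable is handled by the standard lower-central-series property $[L_2,L_{m-1}]\subseteq L_{m+1}$, so only the $M_3$-ambiguity requires the Feigin--Shoikhet centrality. With that citation in place of your stalled computation, and with the $L_2$ issue repaired as above, your argument closes.
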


We recall that the algebra of zero forms is in fact $\mathbb{C}[x_1,x_2,\ldots,x_n]$ and
will be referred to as $S$.
We also use that $f_m$ is surjective when restricted to
$Y:=(\Omega^0)^{\otimes m-2}\otimes (\bigoplus_{j+k\leq\left[\frac{n-2}{2}\right]}\Omega^{2j}\otimes\Omega^{2k})$ as shown by Bapat and Jordan.
Using Theorem 1.2, we
define a similar map for $$Z=S\otimes
Y=\Omega^0\otimes(\Omega^0)^{\otimes m-2}\otimes(\bigoplus_{j+k\leq\left[\frac{n-2}{2}\right]}\Omega^{2j}\otimes\Omega^{2k}).$$
Since $f_m|_{Y}$ is surjective, by Theorem 1.2, so is
$$\breve f_m : Z \twoheadrightarrow N_m.$$
$$a\otimes b \mapsto af_m(b)$$
where $a \in \Omega^0$, $b \in \Omega^0.$ Here, on the right hand side, by abuse of notation, $a$ is $\xi(a).$

Surjectivity of $\breve f_m$ implies that the Jordan-H\"older series of $Z$ dominates
the Jordan-H\"older series of $N_m$. We seek a large $W_n$-submodule $\breve I\subseteq\text{Ker } \breve f_m,$
  so that the Jordan-H\"older series of $Z/\breve I$ still
 dominates the Jordan-H\"older series of $N_m$. Then for the proof of
Conjecture 1.1 it will be sufficient to show that all
 $\mathcal{F}_{\lambda}$ occurring in the Jordan-H\"older series of $Z/\breve I$ satisfy the
bound on $|\lambda|$.

As in ~\cite{ND}, we define $R$ as
$$R:=\mathbb{C}[x_1,x_2,\ldots,x_n]^{\otimes m}=\mathbb{C}[x_{1,1},x_{2,1},\ldots,
x_{n,1},x_{1,2},\ldots ,x_{n,m}].$$ Let $R^{\prime}=S\otimes
R=\mathbb{C}[x_{1,0},x_{2,0},\ldots,
x_{n,0},x_{1,1},\ldots ,x_{n,m}]$. Let the ideals $J_j$ of $R^{\prime}$
for $0\leq j \leq m-1$ be generated by
$X_{i,j}=x_{i,j}-x_{i,j+1}.$

Let $I = J_0^2+ \sum_{i=1}^{m-2} J_i^3 + J_{m-1}^2.$
Let $J = \sum_{i=1}^{m-2} J_i^3 + J_{m-1}^2.$ We will show that $\breve
I=IZ$ is in Ker $\breve f_m$

\begin{lem} The ideal $J_0^2Z$ is a subset of the Kernel of $ \breve f_m$
\label{polez}
\end{lem}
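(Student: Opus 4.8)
The plan is to reduce the statement to a single algebraic identity and then apply Lemma~\ref{EL} twice. Since $Z$ is a module over $R'$ and $J_0^2 Z = \mathrm{span}\{X_{i,0}X_{i',0}z : z \in Z,\ 1\le i,i'\le n\}$ (the multiplication operators $X_{i,0},X_{i',0}\in R'$ commute with the $R'$-action, so any coefficient from $R'$ may be absorbed into $z$), it suffices to show $\breve f_m(X_{i,0}X_{i',0}z)=0$ in $N_m$ for every pure tensor $z=a\otimes a_1\otimes\cdots\otimes a_m$ and all $i,i'$. Write $C:=[\xi(a_2),[\xi(a_3),\ldots,[\xi(a_{m-1}),\xi(a_m)]]]$, so that $C\in L_{m-1}$ and $\breve f_m(z)=a\,[\xi(a_1),C]$. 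The operators $X_{i,0}=x_{i,0}-x_{i,1}$ affect only the front factor $a$ and the first tensor slot $a_1$; accordingly I record the relevant data as $G(u,w):=u\,[\xi(w),C]$, where $u$ is the front polynomial and $w$ the first-slot polynomial, so that $x_{i,0}$ sends $G(u,w)\mapsto G(x_i u,w)$ and $x_{i,1}$ sends $G(u,w)\mapsto G(u,x_i w)$.

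The heart of the argument is a reduction that is exactly one application of Lemma~\ref{EL} with the degree-one element $y=x_i$, base point $a=\xi(w)$, and $b=C\in L_{m-1}$ (so that the index there is $m$ and $A L_{i+1}=M_{m+1}$). Using that $\xi$ is multiplicative on zero-forms, Lemma~\ref{EL} yields
\begin{equation*}
X_{i,0}\,G(u,w)=G(x_i u,w)-G(u,x_i w)=x_i\,[\xi(uw),C]-[\xi(x_i uw),C]\pmod{M_{m+1}}.
\end{equation*}
The crucial feature of the right-hand side is that it depends on the pair $(u,w)$ only through the product $uw$; write $H_i(s):=x_i[\xi(s),C]-[\xi(x_i s),C]=G(x_i,s)-G(1,x_i s)$, so that $X_{i,0}G(u,w)=H_i(uw)$. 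In particular $X_{i,0}$ has collapsed the front coefficient to degree $\le 1$, in the spirit of Theorem~\ref{T:1}.

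Applying $X_{i',0}$ first gives $X_{i',0}G(u,w)=H_{i'}(uw)=G(x_{i'},uw)-G(1,x_{i'}uw)$, and applying $X_{i,0}$ to each term and reusing the reduction gives $X_{i,0}G(x_{i'},uw)=H_i(x_{i'}\cdot uw)$ and $X_{i,0}G(1,x_{i'}uw)=H_i(1\cdot x_{i'}uw)$. Both slot-products equal $x_{i'}uw$, so the two contributions are identical and cancel, giving $\breve f_m(X_{i,0}X_{i',0}z)=0$ in $N_m$ and hence $J_0^2 Z\subseteq\ker\breve f_m$. I expect the main obstacle to be the careful justification of the displayed reduction: one must verify that the remaining slots genuinely assemble into a single fixed element $C\in L_{m-1}$ so that Lemma~\ref{EL} applies, and that $\xi$ intertwines multiplication by the degree-one $x_i$ with multiplication of the corresponding polynomial coefficient. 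The latter is transparent when the first slot $a_1$ is a zero-form (the case $m\ge 3$), but the boundary case $m=2$, where the first slot is an even form, requires checking the relevant multiplicativity of $\xi$ directly, as does confirming that all identities are well defined modulo $M_{m+1}$ independently of the chosen lifts of the $\xi(a_j)$ to $A$.
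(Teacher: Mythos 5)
Your overall strategy is the same as the paper's: everything rests on Lemma~\ref{EL} applied to the four-term expansion of $X_{i,0}X_{i',0}$ on a pure tensor. Your preliminary reduction is correct, and in fact more carefully stated than the paper's own spanning claim: since $R'$ is commutative, $J_0^2Z=\sum_{i,i'}X_{i,0}X_{i',0}(R'Z)=\sum_{i,i'}X_{i,0}X_{i',0}Z$, so it suffices to treat $X_{i,0}X_{i',0}z$ for pure tensors $z$. Your ``step 1'' --- that one application of Lemma~\ref{EL} gives $\breve f_m\bigl(X_{i,0}(u\otimes w\otimes\cdots)\bigr)\equiv H_i(uw) \pmod{M_{m+1}}$, so the image depends on $(u,w)$ only through the product $uw$ --- is also correct, at the same level of rigor as the paper.

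The gap is in how you chain the two applications. From $\breve f_m(X_{i',0}z)=\breve f_m(Q)$, where $Q=G(x_{i'},uw)-G(1,x_{i'}uw)$, you apply $X_{i,0}$ to $Q$ and conclude $\breve f_m(X_{i,0}X_{i',0}z)=\breve f_m(X_{i,0}Q)=0$. That inference requires $\ker\breve f_m$ to be stable under the $R'$-action, which is not known and cannot be invoked here: the discrepancy is $X_{i',0}z-Q=X_{i',0}\bigl(u\otimes w\otimes\cdots-1\otimes uw\otimes\cdots\bigr)$, so $X_{i,0}(X_{i',0}z-Q)$ is itself an element of $J_0^2Z$, and asserting it lies in $\ker\breve f_m$ is an instance of the very lemma being proved. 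The repair is immediate with tools you already have: expand $X_{i',0}z=G(x_{i'}u,w)-G(u,x_{i'}w)$ \emph{exactly} (this is the definition of the action, not a congruence), apply $X_{i,0}$ to each of these two honest elements of $Z$, and only then use step 1 on each: both yield $H_i(x_{i'}uw)$, hence they cancel and $\breve f_m(X_{i,0}X_{i',0}z)=0$. With that reordering your argument is, up to bookkeeping, the paper's proof, which applies Lemma~\ref{EL} once to the full four-term expansion. Finally, the multiplicativity caveat you flag at the end is real --- for zero-forms, $\xi(pq)-\xi(p)\xi(q)=-\tfrac12[\xi(p),\xi(q)]\in L_2$ --- but it is shared with, and equally glossed over by, the paper: corrections occurring inside the bracket slot are harmless because $[L_2,L_{m-1}]\subseteq L_{m+1}\subseteq M_{m+1}$, while corrections in the front factor contribute $L_2\cdot L_m$ terms that the paper's proof ignores as well.
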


\begin{proof}
This is straightforward from Lemma~\ref{EL}.  $J_0^2Z$ is spanned by
elements of the form $(x \otimes 1 \otimes 1 - 1 \otimes x \otimes 1)*(y \otimes 1
  \otimes 1-1 \otimes y \otimes 1)*(1\otimes a
  \otimes b)$ where $*$ is the Fedosov product. We have that
$$(x \otimes 1 \otimes 1 - 1 \otimes x \otimes 1)*(y \otimes 1
  \otimes 1-1 \otimes y \otimes 1)*(1\otimes a
  \otimes b) =$$
$$xy \otimes a \otimes b - x \otimes ya \otimes b - y
  \otimes xa \otimes b + 1 \otimes xya \otimes b. $$
Consider the image of the map $\breve f_m$.  $$\breve f_m(xy \otimes a \otimes b - x \otimes ya \otimes b - y
  \otimes xa \otimes b + 1 \otimes xya \otimes b)=
yx[a,b]-x[ya,b]+y[xa,b]-[xya,b].$$ By Lemma~\ref{EL} this is zero in
$N_m$ so the spanning set of $J_0^2Z$ maps to zero.
\end{proof}

Arbesfeld and Jordan showed that $JZ\subseteq \text{Ker}
f_m$. Lemma~\ref{polez} implies that $J_0^2\cdot Z\subseteq
\text{Ker}\breve f_m$. Thus we have $Z/IZ\twoheadrightarrow N_m$.






We finish the proof analogously to~\cite{ND}.

As in~\cite{ND} $h_{Z/IZ}=h_{R^{\prime}/I}\times h_{X^{\prime}}$ where
$h_{X^{\prime}}$ is the Hilbert series of the generators over
$R^{\prime}$ of $Z$ . Again by using the results from~\cite{BJ} we have
$$ h_{X^{\prime}}=\sum_{j+k\leq 2\lfloor\frac{n-2}{2}\rfloor}\sigma_{2j}\times\sigma_{2k}.$$
\noindent where $\sigma_l=\sum_{i_1\leq i_2\leq \ldots \leq
  i_l}t_{i_1}t_{i_2}\cdots t_{i_l}$ are e the elementary symmetric
functions. We can also compute
$$h_{R^{\prime}/\breve J}=\frac{(1+\sum t_i + \sum_{i\leq j}t_i
  t_j)^{m-2}(1+\sum t_i)^2}{(1-t_1)(1-t_2)\cdots (1-t_n)}.$$
We now use that $$h_{Z/IZ}=
\frac{Q(t_1,t_2,\ldots, t_n)}{(1-t_1)(1-t_2)\cdots (1-t_n)}.$$
Thus $Q= h_{X^{\prime}}\times (1+\sum t_i + \sum_{i\leq j}t_i
  t_j)^{m-2}(1+\sum t_i)^2.$
So the degree of $Q$ is $2m-2 +2\lfloor\frac{n-2}{2}\rfloor$.

For $m$ odd, we can improve this bound to $|\lambda|\leq 2m-2$ using the following result of [BJ]:

\begin{thm}
\label{Mjk}
$M_j M_k\subset M_{j+k-1}$ whenever j or k is odd.
\end{thm}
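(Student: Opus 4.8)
The plan is to first reduce the assertion to a statement purely about the spaces $L_i$, and then to isolate the parity as a statement about symmetric products. Recall from the introduction that $M_i = A L_i$ and that the identity $[a,b]c = -a[b,c]+[ac,b]$ yields $L_i A \subseteq A L_i$. Hence
\[
M_j M_k = (A L_j)(A L_k) = A\,(L_j A)\, L_k \subseteq A\,(A L_j)\, L_k = A\, L_j L_k,
\]
and since $M_{j+k-1} = A L_{j+k-1}$ is a two-sided ideal, it suffices to prove $L_j L_k \subseteq M_{j+k-1}$. Writing $u \in L_j$ and $v \in L_k$, the antisymmetric part is automatically under control: $uv - vu = [u,v] \in [L_j,L_k] \subseteq L_{j+k} \subseteq M_{j+k} \subseteq M_{j+k-1}$. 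Working over $\mathbb{C}$, it therefore suffices to bound the symmetric part, that is, to show $uv+vu \in M_{j+k-1}$, equivalently $L_j L_k + L_k L_j \subseteq M_{j+k-1}$. In particular the problem is symmetric in $j$ and $k$, so I may assume that the odd index is $k$.

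The engine of the argument is a parity lemma asserting that, when $k$ is odd, the symmetric product drops an extra filtration step. I would prove this by induction on $j+k$, using the Feigin--Shoikhet isomorphism $\xi\colon \Omega^{ev}\to A/M_3$ of Theorem~\ref{FSM} together with the surjections $f_m$ of Theorem~\ref{mapth} to translate $uv+vu$ into a product of even differential forms. Under this translation an element arising from $L_k$ carries a form-degree whose parity is governed by $k$; when $k$ is odd the leading (wedge) contribution to the symmetric product is an exact form, so it already lies in the next ideal of the series, while the remaining Fedosov correction terms involve an extra application of $d$ or an extra commutator and hence fall into products $L_{j'}L_{k'}$ with $j'+k' < j+k$, to which the inductive hypothesis applies. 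Collecting these contributions gives $uv+vu \in M_{j+k-1}$, and combining with the antisymmetric bound above yields $2uv \in M_{j+k-1}$, hence $uv \in M_{j+k-1}$.

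The main obstacle is precisely this parity lemma: making rigorous the claim that odd bracket-length forces the leading symmetric term to be exact, and verifying that every correction term genuinely reduces to a strictly smaller instance of the same statement, so that the induction on $j+k$ is well-founded. This requires the explicit description of the Fedosov product on $\Omega^{ev}$ and careful bookkeeping of form-degrees and of which terms are closed; the identity of Lemma~\ref{EL} and the Leibniz rule for $\mathrm{ad}$ are the tools for rewriting each correction as an element of $A L_{j+k-1}$. The reductions in the first paragraph are routine, and the hypothesis of characteristic zero is used only to divide by $2$, so the entire difficulty is concentrated in the form-degree parity computation.
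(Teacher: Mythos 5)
You should first note that the paper itself does not prove Theorem~\ref{Mjk}: it is quoted as a known result of Bapat and Jordan \cite{BJ} and used as a black box. So the relevant comparison is with that external proof, and the question is whether your argument could stand on its own. Your opening reductions are correct and essentially standard: $M_jM_k=(AL_j)(AL_k)\subseteq AL_jL_k$ using $L_jA\subseteq M_j=AL_j$, and the antisymmetric part is handled by $uv-vu=[u,v]\in[L_j,L_k]\subseteq L_{j+k}\subseteq M_{j+k-1}$, so the theorem reduces to $L_jL_k+L_kL_j\subseteq M_{j+k-1}$. But that symmetric statement \emph{is} the theorem: the unconditional result one gets without the parity hypothesis is only $M_jM_k\subseteq M_{j+k-2}$, and the whole difficulty of \cite{BJ} lies in gaining the extra step when $j$ or $k$ is odd. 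Your treatment of this step is a sketch that you yourself flag as the ``main obstacle,'' so the core of the theorem is not proven.

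Moreover, the induction you outline cannot close as stated. If a correction term lands in $L_{j'}L_{k'}$ with $j'+k'<j+k$, the inductive hypothesis yields only $L_{j'}L_{k'}\subseteq M_{j'+k'-1}$; since the ideals $M_i$ are decreasing in $i$, the ideal $M_{j'+k'-1}$ is \emph{larger} than $M_{j+k-1}$, so this conclusion is strictly weaker than what you need and gives no control on membership in $M_{j+k-1}$. For such an induction to be well-founded the correction terms would have to preserve total weight (land in products $L_{j'}L_{k'}$ with $j'+k'\geq j+k$, or in $M_{j+k-1}$ outright), and nothing in your sketch guarantees this. There is also a structural problem with the proposed use of Theorem~\ref{FSM}: the Feigin--Shoikhet map $\xi$ is an isomorphism onto $A/A[A,[A,A]]$, i.e.\ it only sees $A$ modulo $M_3$, so by itself it cannot detect membership in $M_{j+k-1}$ for large $j+k$; one needs the higher maps $f_m$ of Theorem~\ref{mapth} together with a genuine parity analysis of form degrees, which is precisely the content of the Bapat--Jordan argument the paper cites rather than reproduces.
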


To apply this, we use the following argument suggested by Pavel Etingof.  Notice that
$$a[a_1,...,[a_{m-1},b[c,d]]] = a\sum_{S\subset [1, m-1] }(\prod_{i \in S} {\rm ad} a_i)(b) \cdot (\prod_{i \notin S}{\rm ad}a_l)([c,d]))$$

If $[S] = s$, then the corresponding term on the right hand side is in $M_{s+1}M_{m-s+1}$. But one of the numbers $s+1$ and $m-s+1$ is odd, since their sum is $m+2$ which is odd. So by theorem 2.6, all the terms on the right hand side are in $M_{m+1}$, hence are zero in $N_m$. So the left hand side is zero in $M_m$.
Now under the Feigin-Shoiket isomophism, $A[A,A]$ corresponds to forms of degree $2$ and higher, so in the proof of Theorem $\ref{mapth}$, we may replace $\bigoplus_{j+k\leq\left[\frac{n-2}{2}\right]}\Omega^{2j}\otimes\Omega^{2k}$ by $\Omega^0$.  We may then carry out the argument exactly as above, but with the dependence on $n$ removed.







\section{Conclusion}{\label{JH}}
Now that we have found a lower bound of $|\lambda|$ (Theorem~\ref{C:1}), we
may obtain the Jordan-H\"older series for several values of $m,n$ for
$N_m(A_n)$ via MAGMA computation.  For example, we have the following
results, in which we will denote each instance of
$\mathcal{F}_{\lambda}$ by the $n$-tuple $\lambda$ for economy of notation.

\begin{thm}
The Jordan-H\"older series for $N_m(A_2)$ for $3 \leq m \leq 7$ are:
\begin{itemize}
\item $N_3=(2, 1) + (2, 2)$.
\item $N_4=(3, 1) + (3, 2) + (3, 3)$.
\item $N_5=(4, 1) + (3, 2) + 2(4, 2) + (4, 3) + (4, 4)$.
\item $N_6=(5, 1) + (4, 2) + (3, 3) + 2(5, 2) + 2(4, 3) + 2(5, 3) + (5,
  4) + (5, 5)$.
\item $N_7=  (6, 1) + 2(5, 2) + 2(4, 3) + 3(6, 2) + 3(5, 3) + 3(4,
  4) + 3(6, 3) + 2(5, 4) + 2(6, 4) + (6, 5) + (6, 6)$.
\end{itemize}
\end{thm}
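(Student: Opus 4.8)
The plan is to reduce the determination of each Jordan--H\"older series to a finite linear-algebra problem by invoking the degree bound of Theorem~\ref{C:1}, and then to carry out that finite problem by computer. Fix $n=2$, so that $\lfloor(n-2)/2\rfloor=0$ and Theorem~\ref{C:1} asserts that every constituent $\mathcal{F}_\lambda$ of $N_m$ satisfies $|\lambda|\leq 2m-2$. Since (by [EKM], as recalled in Section~\ref{Pr-1}) $N_m$ is a successive extension of tensor field modules, in the Grothendieck group of $W_2$-modules one may write $[N_m]=\sum_\lambda c_\lambda[\mathcal{F}_\lambda]$ with $c_\lambda\in\mathbb{Z}_{\geq 0}$, the sum ranging over partitions with $|\lambda|\leq 2m-2$. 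Because the Hilbert series is additive on short exact sequences, this yields the exact identity of power series $h_{N_m}=\sum_\lambda c_\lambda\,h_{\mathcal{F}_\lambda}$.

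Next I would pass to numerators. Every $\lambda$ in the tables below has $\lambda_1\geq 2$, so by Rudakov's structure theorem (\cite{R}) the corresponding $\mathcal{F}_\lambda$ is the full tensor field module $\breve{\mathcal{F}}_\lambda$, whose Hilbert series is $s_\lambda(t_1,t_2)/\big((1-t_1)(1-t_2)\big)$, with $s_\lambda$ the Schur polynomial, homogeneous of degree $|\lambda|$. Multiplying the identity above by $(1-t_1)(1-t_2)$ therefore produces a symmetric polynomial $Q_m(t_1,t_2):=(1-t_1)(1-t_2)\,h_{N_m}$ of degree at most $2m-2$, equal to $\sum_\lambda c_\lambda\,s_\lambda(t_1,t_2)$. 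The coefficients of $Q_m$ up to degree $2m-2$ depend only on the coefficients of $h_{N_m}$ up to degree $2m-2$. Since the Schur polynomials $\{s_\lambda:|\lambda|\leq 2m-2\}$ are linearly independent, the multiplicities $c_\lambda$ are uniquely recovered from $Q_m$ via the unitriangular change of basis from the monomial symmetric functions to the Schur functions.

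The only remaining input is the bigraded Hilbert series of $N_m$, and the bound guarantees that I need only the dimensions $\dim(N_m)_{(a,b)}$ with $a+b\leq 2m-2$. These I would compute in MAGMA: in each total degree $d\leq 2m-2$, form a basis of the homogeneous component of $A_2$, generate $L_m$ and $L_{m+1}$ by iterated commutators, pass to $M_m=A_2L_m$ and $M_{m+1}=A_2L_{m+1}$ (here Theorem~\ref{T:1} is useful in reducing the generating set, since it lets one take the left multiplier to have degree $\leq 1$), and take dimensions of the quotients. Assembling these numbers into $Q_m$ and applying the decomposition of the previous paragraph then yields the stated tables.

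The main obstacle is purely computational: the homogeneous components of $A_2$ have dimension $2^d$, so for $N_7$ one must work up to degree $12$, which is near the limit of feasibility, and constructing $M_m$ as a subspace of $A_2$ is the costly step. A secondary point to verify is that no exceptional constituent $\mathcal{F}_{(1,0)}$ (closed one-forms) intervenes, as its numerator is not $s_{(1,0)}$; since every $\lambda$ produced satisfies $\lambda_1\geq 2$, the pure Schur decomposition used above is valid \emph{a posteriori}. Finally, for the odd values $m=3,5,7$ one may use the sharper bound together with Theorem~\ref{Mjk} as an independent consistency check on the computed $Q_m$.
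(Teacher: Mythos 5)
Your proposal is correct and follows essentially the same route as the paper: the paper likewise uses the bound of Theorem~\ref{C:1} to reduce to degrees $\leq 2m-2$, computes the Hilbert series of $N_m$ in MAGMA (with Theorem~\ref{T:1} easing the computation), and extracts multiplicities by comparing $h_{N_m}\prod_i(1-t_i)$ with the known numerators of the $h_{\mathcal{F}_\lambda}$, exactly as sketched in Section~\ref{Pr-1}. You merely spell out details the paper leaves implicit, such as the Schur-basis linear algebra and the a posteriori exclusion of the exceptional constituent $\mathcal{F}_{(1,0)}$.
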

These decompositions were conjectured by Arbesfeld and are now theorems.
\begin{thm}
The Jordan-H\"older series for $N_m(A_3)$ for $m=3, 4$ are:
\begin{itemize}
\item $N_3=(2,1,0) + (2,2,0)$.
\item $N_4=(2, 2, 2) + (2, 2,1) + (3, 1,0)+(3,1,1)+(3,2,0)+(3,3,0)$.
\end{itemize}
\end{thm}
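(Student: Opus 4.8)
The plan is to reduce each decomposition to a finite linear-algebra problem by combining the degree bound of Theorem~\ref{C:1} with a direct computation of the multigraded character of $N_m$. First I would invoke Theorem~\ref{C:1} with $n=3$: since $\lfloor\frac{n-2}{2}\rfloor=0$, every $\mathcal{F}_{\lambda}$ occurring in $N_m$ satisfies $|\lambda|\le 2m-2$ (for $m=3$ the odd-$m$ bound gives the same value), so only finitely many partitions $\lambda=(\lambda_1\ge\lambda_2\ge\lambda_3\ge 0)$ can appear: those with $|\lambda|\le 4$ for $m=3$ and $|\lambda|\le 6$ for $m=4$. Without this a priori bound the remaining steps could never be made rigorous, since a truncated character computation cannot by itself rule out contributions from arbitrarily large $\lambda$; thus Theorem~\ref{C:1} is precisely what promotes Arbesfeld's conjectural lists to theorems.

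Second, I would compute the multigraded Hilbert series of $N_m=M_m/M_{m+1}$, recording the $\mathbb{Z}^3$ weight grading coming from the diagonal $\mathfrak{gl}_3\subset W_3$ action together with the polynomial grading. Since $A_3$ is explicitly presented and $M_m$, $M_{m+1}$ are the two-sided ideals generated by $L_m$, $L_{m+1}$, Theorem~\ref{T:1} lets me replace $A\cdot L_m$ by $V\cdot L_m$ with $V$ spanned by elements of degree $\le 1$, cutting the ambient space down to a size amenable to MAGMA; this is the step I would delegate to the machine. The bound from the first step tells me exactly how many weight-degrees must be computed in order to pin down all the multiplicities.

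Third, I would match the computed character against the known characters of the candidate irreducibles. In the generic case $\operatorname{ch}\mathcal{F}_{\lambda}=s_{\lambda}(t_1,t_2,t_3)\big/\prod_{i}(1-t_i)$, where $s_{\lambda}$ is the Schur polynomial giving the $\mathfrak{gl}_3$ character of $V_{\lambda}$; for the exceptional weights $\lambda=(1^k)$ I would instead use the character of the irreducible submodule $\mathcal{F}_{(1^k)}$ of closed $k$-forms (the \cite{R} result recalled above), which is the alternating sum of the $\operatorname{ch}\Omega^{j}$ dictated by exactness of the polynomial de Rham complex. Because the Schur polynomials are linearly independent, so are the $\operatorname{ch}\mathcal{F}_{\lambda}$, and hence the class of $N_m$ in the Grothendieck group is uniquely determined by its character. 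Writing $\operatorname{ch}N_m=\sum_{\lambda}c_{\lambda}\operatorname{ch}\mathcal{F}_{\lambda}$ then yields a finite linear system with a unique nonnegative-integer solution $\{c_{\lambda}\}$, which I would solve and compare with the stated lists.

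The main obstacle is bookkeeping in the second and third steps rather than any conceptual hurdle: one must compute $N_m$ in enough weight-degrees to separate all the candidate $\mathcal{F}_{\lambda}$, and one must be careful to set up the linear system with the genuine irreducibles $\mathcal{F}_{(1^k)}$ (closed forms) in place of the reducible modules $\Omega^k$, so that the $c_{\lambda}$ are honest Jordan-H\"older multiplicities. Once the finiteness supplied by Theorem~\ref{C:1} is in hand, everything else is a bounded and fully verifiable computation.
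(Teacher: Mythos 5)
Your proposal is correct and follows essentially the same route as the paper: the paper likewise uses the bound $|\lambda|\le 2m-2+2\lfloor\frac{n-2}{2}\rfloor$ from Theorem~\ref{C:1} to reduce to finitely many candidate $\lambda$, computes the (multigraded) Hilbert series of $N_m$ by MAGMA (with Theorem~\ref{T:1} making the computation feasible), and matches against the Hilbert series of the $\mathcal{F}_{\lambda}$ as set up in Section~\ref{Pr-1}. Your explicit attention to the exceptional weights $\lambda=(1^k)$, where $\mathcal{F}_{\lambda}$ is the closed $k$-forms rather than all of $\Omega^k$, is a detail the paper leaves implicit but is handled the same way in substance.
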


\begin{thm}
The Jordan-H\"older series for $N_m(A_4)$, $m=3, 4$ are:
\begin{itemize}
\item $N_3=(2,1,0,0)+(2,2,0,0)$.
\item $N_4=(3,3,0,0) + (3,2,0,0) + (3,1,1,1) + (3,1,1,0) + (3,1,0,0) + (2,2,1,1) + (2,2,2,0) + (2,1,1,1) + (2,1,1,0). $

\end{itemize}
\end{thm}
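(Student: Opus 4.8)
The statement is purely computational, so the plan is to reduce the determination of each Jordan-H\"older series to a finite linear-algebra problem and then carry out the underlying character computation by machine. The conceptual starting point is that in the Grothendieck group of $W_4$-modules the multigraded characters of the simple objects $\mathcal{F}_\lambda$ are linearly independent formal power series; consequently the Jordan-H\"older multiplicities $c_\lambda = [N_m : \mathcal{F}_\lambda]$ are uniquely determined by the character of $N_m(A_4)$ with respect to the maximal torus of $\mathfrak{gl}_4 \subset W_4$. Equivalently, they are determined by the $\mathbb{Z}^4$-graded Hilbert series of $N_m(A_4)$ in the variables $t_1, \dots, t_4$ recording the multidegree inherited from $A_4$. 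Thus it suffices to compute this character and then expand it in the basis of $\mathcal{F}_\lambda$-characters.

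To compute the character I would work directly with the presentation $N_m = M_m/M_{m+1}$ by machine (MAGMA), and here Theorem~\ref{T:1} is the essential simplification: instead of the full left ideal $A\cdot L_m$ one may use $V\cdot L_m$ with $V$ spanned by the elements of degree $\le 1$, which sharply reduces the size of the spanning sets that must be handled. Theorem~\ref{C:1} then makes the problem finite. As recorded in Section~\ref{Pr-1}, each $\mathcal{F}_\lambda$ has $h_{\mathcal{F}_\lambda} = p/\prod_i(1-t_i)$ with $\deg p = |\lambda|$, so $\mathcal{F}_\lambda$ first contributes to $N_m$ in total degree $|\lambda|$. Hence the bound $|\lambda| \le 2m-2 = 4$ for $m=3$ (odd) and $|\lambda| \le 2m-2 + 2\lfloor \frac{n-2}{2} \rfloor = 8$ for $m=4$, $n=4$, guarantees that no new simple constituent can appear above these degrees, and I only need the multidegree decomposition of the graded pieces of $N_m(A_4)$ up to total degree $4$, respectively $8$.

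For the expansion I would tabulate the numerators $p_\lambda := h_{\mathcal{F}_\lambda}\prod_i (1-t_i)$. By the classification recalled in Section~\ref{Pr-2}, whenever $\lambda_1 \ge 2$ or $\lambda = (1^n)$ we have $\mathcal{F}_\lambda = \breve{\mathcal{F}}_\lambda = \mathbb{C}[x_1, \dots, x_n]\otimes V_\lambda$, so $p_\lambda$ is exactly the Schur polynomial $s_\lambda(t_1, \dots, t_n)$. The only constituents requiring care are the closed-form modules $\mathcal{F}_{(1^k, 0^{n-k})}$ for $1 \le k \le n-1$: their characters I would extract from the polynomial de Rham complex, using that the polynomial de Rham cohomology is $\mathbb{C}$ in degree $0$ and vanishes otherwise, so that the closed $k$-forms are the image of $d : \Omega^{k-1} \to \Omega^k$ and $p_{(1^k)}$ is a fully explicit alternating sum of the numerators $e_j(t)$ of the $\Omega^j$. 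With these numerators fixed, expanding $h_{N_m}\prod_i(1-t_i)$ is triangular with respect to the bottom degree $|\lambda|$, and peeling off constituents from the lowest degree upward yields the nonnegative integer multiplicities $c_\lambda$ claimed in the theorem.

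The main obstacle is computational rather than conceptual: $A_4$ grows very rapidly, and assembling $M_m/M_{m+1}$ together with its weight-space dimensions in all total degrees up to $8$ for $N_4(A_4)$ is heavy, so the practical difficulty lies in organizing the computation efficiently---exploiting the $S_4$-symmetry under permutation of the generators and the $\mathbb{Z}^4$-grading to break $N_m(A_4)$ into manageable weight spaces, together with the reduction of Theorem~\ref{T:1}. A secondary but genuine pitfall is the bookkeeping for the form modules: one must use the corrected closed-form numerators above rather than the $e_k(t)$ of the full $\Omega^k$, or the multiplicities attached to the relevant hook-column partitions will be miscounted. As independent checks I would confirm that the computed character is dominated, term by term, by the explicit rational upper bound $h_{Z/IZ} = Q/\prod_i(1-t_i)$ produced by the surjection $\breve f_m : Z/IZ \twoheadrightarrow N_m$ of Section~\ref{Rez}, and that the answer for $N_3(A_4)$ matches the stable pattern $(2,1,\dots) + (2,2,\dots)$ already seen for $N_3(A_2)$ and $N_3(A_3)$.
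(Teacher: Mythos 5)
Your proposal is correct and follows essentially the same route as the paper: the paper likewise combines the degree bound of Theorem~\ref{C:1} (giving $|\lambda|\leq 4$ for $m=3$ and $|\lambda|\leq 8$ for $m=4$, $n=4$) with a MAGMA computation of the graded character of $N_m=M_m/M_{m+1}$, simplified via Theorem~\ref{T:1}, and then matches against the known Hilbert series of the $\mathcal{F}_\lambda$ as set up in Section~\ref{Pr-1}. Your write-up merely makes explicit the details the paper leaves implicit (linear independence of the $\mathcal{F}_\lambda$-characters, Schur-polynomial numerators, and the corrected numerators for the closed-form modules $\mathcal{F}_{(1^k,0^{n-k})}$), all of which are sound.
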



Note that the decomposition of $N_3$ was also computed in [EKM].

\section{Acknowledgments}

This paper is the result of research done at the Reserach Science Institute at MIT. The author would like to express his gratitude to Bhairav Singh for his helpful contribution as a mentor during RSI, to  Pavel Etingof for posing the problem, and also to David
Jordan, Martina Balagovic, Asilata Bapat for the useful discussions on Lie
algebras, representation theory and MAGMA. The  author would like to thank to St.~Cyril and
St.~Methodius International Foundation, DecArt, the High Student Institute of Mathematics and
Informatics of the Bulgarian Academy of Sciences, CEE, RSI and MIT for funding.




\end{document}